\newcommand\disp{\displaystyle}
\newtheorem{theorem}{Theorem}[section]
\newtheorem{proposition}[theorem]{Proposition}
\newtheorem{defprop}[theorem]{Definition-Proposition}
\theoremstyle{definition}
\newtheorem{definition}[theorem]{Definition}
\theoremstyle{remark}
\newtheorem{remark}[theorem]{Remark}
\theoremstyle{plain}
\newcommand{\thistheoremname}{}
\newtheorem{genericthm}[theorem]{\thistheoremname}
\newtheorem*{genericthm*}{\thistheoremname}
\newenvironment{namedthm*}[1]
  {\renewcommand{\thistheoremname}{#1}%
   \begin{genericthm*}}
  {\end{genericthm*}}
\newcommand\cC{\mathcal{C}}
\newcommand\cD{\mathcal{D}}
\newcommand\cM{\mathcal{M}}
\newcommand\Tree{\mathcal{TR}}
\newcommand{\bR}{\mathbb{R}}
\newcommand{\bZ}{\mathbb{Z}}
\newcommand\bm{\mathbf{m}}
\newcommand\bn{\mathbf{n}}
\newcommand\bp{\mathbf{p}}
\newcommand\bs{\mathbf{s}}
\newcommand\bzero{\mathbf{0}}
\newcommand{\on}{\operatorname}
\newcommand\pt{{\on{pt}}}
\newcommand\pr{\on{pr}}
\newcommand\id{\on{id}}
\newcommand{\Fuk}{\on{Fuk}}
\newcommand{\comp}{C^2}
\newcommand{\Symp}{\mathsf{Symp}}
\newcommand{\op}{{\on{op}}}
\renewcommand{\comp}{{\on{comp}}}
\newcommand{\seam}{{\on{seam}}}
\newcommand{\mk}{{\on{mark}}}
\newcommand{\incom}{\on{in}}
\newcommand{\inte}{{\on{int}}}
\newcommand{\tree}{{\on{tree}}}
\newcommand{\proj}{{\on{proj}}}
\newcommand{\oneO}{{\on{Cube}}}
\newcommand{\twoO}{{\on{2Cube}}}
\newcommand{\Top}{{\textsf{Top}}}
\newcommand{\Ob}{\on{Ob}}
\newcommand{\Mor}{\on{Mor}}
\newcommand{\spn}{\on{span}}
\newcommand{\coll}{\on{coll}}
\newcommand{\height}{\on{ht}}
\newcommand\qu{/\kern-.7ex/} 
\newcommand\lqu{\backslash \kern-.7ex \backslash}
\newcommand{\ol}{\overline}
\newcommand{\sr}{\stackrel}
\newcommand{\wt}{\widetilde}
\newcommand{\eps}{\epsilon}
\def\hra{\hookrightarrow}
\def\lra{\longrightarrow}
\newcounter{qcounter}
\newcommand\quotient[2]{
        \mathchoice
            {
                \text{\raise1ex\hbox{$#1$}\Big/\lower1ex\hbox{$#2$}}%
            }
            {
                #1\,/\,#2
            }
            {
                #1\,/\,#2
            }
            {
                #1\,/\,#2
            }
    }
\newcommand\quoti[2]{
                \text{\raise1ex\hbox{$#1$}/\lower1ex\hbox{$\scriptstyle#2$}}
  }
\newcommand\quot[2]{
                \text{\raise1ex\hbox{$#1\!\!$}/\lower1ex\hbox{$\!\scriptstyle#2$}}
  }
\newcommand\quo[2]{
                \text{\raise.8ex\hbox{$\scriptstyle#1\!$}/\lower.8ex\hbox{$\!\scriptstyle#2$}}
  }
\newcommand\qq[2]{
                \text{\raise.8ex\hbox{$#1\!$}/\lower.8ex\hbox{$#2$}}
}
\title{$(A_\infty,2)$-categories and relative 2-operads}
\author{Nathaniel Bottman}
\address{Max Planck Institute for Mathematics,
Vivatsgasse 7, 53111 Bonn, Germany}
\address{Department of Mathematics, University of Southern California,
3620 S Vermont Ave, Los Angeles, CA 90089, USA}
\email{bottman@mpim-bonn.mpg.de}
\author{Shachar Carmeli}
\address{Faculty of Mathematics and Computer Science, Weizmann Institute, 234 Herzl Street, Rehovot 7610001, Israel}
\email{shachar.carmeli@weizmann.ac.il}
\begin{document}

\maketitle

\begin{abstract}
We define the notion of a 2-operad relative to an operad, and prove that the 2-associahedra form a 2-operad relative to the associahedra.
Using this structure, we define the notions of an $(A_\infty,2)$-category and $(A_\infty,2)$-algebra in spaces and in chain complexes over a ring.
Finally, we show that for any continuous map $A \to X$, we can associate the related notion of an $\wt{(A_\infty,2)}$-algebra $\theta(A \to X)$ in \Top, which specializes to $\theta(\pt \to X) = \Omega^2 X$ and $\theta(A \to \pt) = \Omega A \times \Omega A$.
\end{abstract}

\section{Introduction}

\label{s:intro}

The first author recently constructed in \cite{b:2ass} a family of abstract polytopes called 2-associahedra, which he realized as stratified spaces in \cite{b:realization}.
These spaces are intended to play the same role as associahedra do for the definition of an $A_\infty$-category, but for a new algebraic notion called an {$(A_\infty,2)$-category}.
As developed in \cite{b:2ass,b:realization,b:sing,bw:compactness,b:thesis,mww}, the correct way to express the functoriality of the Fukaya category is to construct an $(A_\infty,2)$-category (the \emph{symplectic $(A_\infty,2)$-category}) whose objects are symplectic manifolds and where $\hom(M,N)$ is $\Fuk(M^-\times N)$.
The definition of the notion of an $(A_\infty,2)$-category is therefore a fundamental step in the first author's ongoing project to construct the symplectic $(A_\infty,2)$-category.

In this paper we show that the 2-associahedra (or their realizations) have an operad-like structure: they form a \emph{2-operad relative to the associahedra}.
The notion of a relative 2-operad as such is new.
It can be phrased in terms of Batanin's theory of higher operads (see \S\ref{ss:batanin} and Prop.~\ref{prop:connection_with_batanin}), but we feel that the concept of a relative 2-operad is natural enough to deserve its own name and definition.
One can define a category over a relative 2-operad, and when we specialize to the 2-operad of topologically-realized 2-associahedra, we obtain the definition of $(A_\infty,2)$-categories in \Top.

Recall from \cite{b:2ass} that for every $r \geq 1$ and $\bn \in \bZ_{\geq0}^r\setminus\{\bzero\}$ there is a 2-associahedron $W_\bn$, which is an abstract polytope (after adding a formal minimal element) of dimension $|\bn|+r-3$ and, in particular, a poset.
In \cite{b:realization}, the first author constructed realizations of the 2-associahedra in terms of witch curves, denoted $\ol{2\cM}_\bn$; $\ol{2\cM}_\bn$ is a compact metrizable space stratified by $W_\bn$.
These realizations satisfy the following properties, which inspire our Def.~\ref{def:2op} of a relative 2-operad:
\begin{itemize}
\item[] \textsc{(forgetful)} $\ol{2\cM}_\bn$ is equipped with a forgetful map $\pi\colon \ol{2\cM}_\bn \to \ol\cM_r$ to the compactified moduli space of disks with $r+1$ boundary marked points, which is a continuous and surjective map of stratified spaces.

\item[] \textsc{(recursive)} For any stable tree-pair $2T = (T_b \sr{p}{\to} T_s) \in W_\bn^\tree$, there is a continuous and injective map of stratified spaces
\begin{align}
\Gamma_{2T} \colon \prod_{
\substack{\alpha \in V_\comp^1(T_b),
\\
\incom(\alpha)=(\beta)}}
\ol{2\cM}_{\#\!\incom(\beta)}^\tree
\times
\prod_{\rho \in V_\inte(T_s)} \prod^{\ol\cM_{\#\!\incom(\rho)}}_{
\substack{
\alpha\in
V_\comp^{\geq2}(T_b)\cap f^{-1}\{\rho\},
\\
\incom(\alpha)=(\beta_1,\ldots,\beta_{\#\!\incom(\rho)})}
}
\hspace{-0.25in} \ol{2\cM}^\tree_{\#\!\incom(\beta_1),\ldots,\#\!\incom(\beta_{\#\!\incom(\alpha)})}
\hra \ol{2\cM}_\bn^\tree,
\end{align}
where the superscript on one of the product symbols indicates that it is a fiber product with respect to the maps described in \textsc{(forgetful)}.
\end{itemize}

These ingredients allow us to state the first main result of this paper, taken from \S\ref{ss:examples}:

\medskip
\noindent {\bf Definition-Proposition~\ref{defprop:2Mn_is_2op}, paraphrased.}
{\it The realized 2-associahedra $(\ol{2\cM}_\bn)$, together with the forgetful maps $\pi\colon \ol{2\cM}_\bn \to \ol\cM_r$ and certain of the structure maps $\Gamma_{2T}$, form a 2-operad relative to the associahedra $(\ol\cM_r)$.
The same statement is true when $\ol{2\cM}_\bn$ resp.\ $\ol\cM_r$ are replaced by the 2-associahedra $W_\bn$ resp.\ $K_r$.}
\medskip

\noindent Indeed, these properties of the 2-associahedra and their realizations get to the heart of the definition of a relative 2-operad: such a thing consists of an underlying operad together with a collection of objects indexed by $\bigsqcup_{r\geq1} (\bZ_{\geq0}^r\setminus\{\bzero\})$, together with maps of the form $\pi$ and $\Gamma_{2T}$ that satisfy suitable compatibility conditions.
(More precisely, we only need structure maps $\Gamma_{2T}$ for certain tree-pairs, as described in Def.-Prop.~\ref{def:2op}.)
In the same subsection we give another example of a relative 2-operad, which is denoted $(\twoO_\bn)$ and is closely related to the little cubes family of operads.
We expect that $(\twoO_\bn)$ and $(\ol{2\cM}_\bn)$ are homotopy equivalent in an appropriate model categories of relative 2-operads, which we intend to define in the future.

Next, we define in \S\ref{s:cats_over_2ops} the notion of a \emph{category over a relative 2-operad in a category with finite limits} and an \emph{$R$-linear category over a relative 2-operad in \textsf{Top}}.
The latter definition allows us to make the following definition, which formed the first author's original motivation to formulate and study the 2-associahedra:

\medskip
\noindent
{\bf Definition~\ref{def:A_infty_2_cat}.}
An \emph{$R$-linear $(A_\infty,2)$-category} is an $R$-linear category over the relative 2-operad $\bigl((\ol\cM_r),(\ol{2\cM}_\bn)\bigr)$.
\null\hfill$\triangle$

\medskip

\noindent
We produce a family of examples of $\wt{(A_\infty,2)}$-algebras (i.e.\ $(\wt{A_\infty,2})$-categories with one object), where the tilde indicates that we work with $(\twoO_\bn)$, rather than $(\ol{2\cM}_\bn)$:

\medskip
\noindent
{\bf Proposition~\ref{prop:theta}.}
{\it Fix a map $f\colon (A,q) \to (X,p)$ of pointed topological spaces.
Define a space $\theta(A \to X)$ by
\begin{align}
\theta(A \to X)
\coloneqq
\left\{
\left.\left(
\substack{
u\colon [0,1]^2 \to X
\\
\gamma_\pm\colon [0,1] \to A}
\right)
\:\right|\:
\substack{
u(-,0) = f\circ\gamma_-,
\\
u(-,1) = f\circ\gamma_+},
\:
\substack{
u(0,-) = p = u(1,-)
\\
\gamma_\pm(0) = q = \gamma_\pm(1)}
\right\},
\end{align}
and equip $\theta(A\to X)$ with maps $s,t \colon \theta(A\to X) \to \Omega A$ that send $(u,\gamma_+,\gamma_-)$ to $\gamma_-$ resp.\ $\gamma_+$.
Then the pair $\theta(A\to X) \sr{s,t}{\rightrightarrows} \Omega A$ is an $\wt{(A_\infty,2)}$-algebra.}
\null\hfill$\triangle$
\medskip

We close this introduction by mentioning Michael Batanin's theory of $m$-operads, which is related to the notion of relative 2-operad defined in this paper.
In fact, relative 2-operads can be phrased as instances of 2-operads, as we show in Prop.~\ref{prop:connection_with_batanin}.
Moreover, in \cite{ba:config} Batanin proposed a collection of spaces $(B_T)$, where $T$ ranges over the 2-ordinals; this collection forms a 2-operad, and the spaces $B_T$ seem to be surjective images of the spaces $\ol{2\cM}_\bn$.
An example of this is shown in Fig.~\ref{fig:BT}.
\begin{figure}[H]
\centering
\def\svgwidth{0.6\columnwidth}
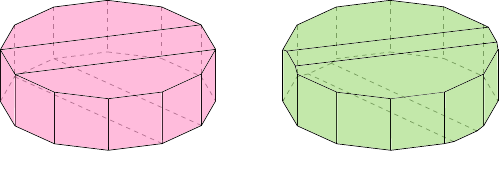
\caption{There are some similarities between the spaces $\ol{2\cM}_\bn$ and the spaces $B_T$ defined by Batanin in \cite{ba:config}, for $T$ a 2-ordinal.
(In fact, Batanin allows $T$ to be an $m$-ordinal.)
(a) is the space $B_T$ corresponding to the 2-ordinal $T \coloneqq 0 <_0 < 1 <_0 2$; (b) is the space $W_{111}$.
(a) can be obtained from (b) by collapsing 8 of the edges in (b) into vertices.}
\label{fig:BT}
\end{figure}

\subsection{Further directions}

\begin{itemize}
\item Symplectic geometers define an ($R$-linear) $A_\infty$-category to be a category over the operad of cellular chains on realized associahedra, with respect to the obvious cellular structure.
Ultimately, it would be convenient to have an analogous definition of $(A_\infty,2)$-categories, as opposed to the definition we give in this paper, which uses singular chains on realized 2-associahedra.
It is not currently clear to the authors how to accomplish this, because faces of 2-associahedra decompose canonically as products \emph{of fiber products} of 2-associahedra.
In future work we aim to address this issue.

\smallskip

\item It would be very interesting to understand the connection between the 2-associahedral relative 2-operad and the little 2-disks operad $E_2$.
(This would be related to Batanin's exploration of the connection between $E_n$-operads and $n$-operads, see in particular \cite{ba:config,ba:eckman-hilton,ba:monoidal}.)
Once this is accomplished, we hope that finding such a connection would shed light on what happens when one restricts an $(A_\infty,2)$-category to a single object with the identity 1-morphism.
One might speculate that such a restriction would have the structure of a homotopy Gerstenhaber algebra.
See \cite{b:fm} for some recent progress in this direction.

\smallskip

\item Categories over the $A_\infty$-operad are exactly $A_\infty$-categories in the sense of \cite{ba:homotopy}.
There is a homotopy theory for such catgegories and there is a natural nerve functor of $\infty$-categories from $A_\infty$-categories to the $(\infty,1)$-category $\textsf{Cat}_\infty$. We expect a similar picture in the case of $(A_\infty,2)$-categories: namely, that they can be organized to form an $\infty$-category using a model structure on them, and that there is a functor to the $(\infty,1)$-category of $(\infty,2)$-categories.
Then, one could consider the 2-associahedra as encoding higher coherences in certain $(\infty,2)$-categories in an economic way. 
\end{itemize}

\section{Relative 2-operads}

In this section we will define and explore the notion of a relative 2-operad.
We begin by defining this notion in \S\ref{ss:def_of_relative_2-operad}.
Next, in \S\ref{ss:examples} we give two examples of relative 2-operads: $(\twoO_\bn)$ is a 2-operad of spaces relative to the little intervals operad, and the 2-associahedra $(W_\bn)$ form a 2-operad of posets relative to the operad of associahedra; the latter statement is also true for the topological realizations in the category of spaces.
Finally, in the subsection \S\ref{ss:batanin} we analyze the relationship between relative 2-operads and Batanin's notion of 2-operads.

From now on, $\prod_i^Y X_i$ will denote the fiber product of a collection of objects $(X_i)$ in a category $\cC$ with respect to morphisms $X_i \to Y$.

\subsection{The definition of a relative 2-operad}
\label{ss:def_of_relative_2-operad}

Before we come to the definition of a relative 2-operad, we set notation by recalling the definition of an operad.

\begin{definition}[Def.~1.4, \cite{mss:operads}]
A \emph{non-$\Sigma$ operad} in a symmetric monoidal category $(\cC,\otimes,1)$ is a collection $(P_r)_{r\geq1} \subset \cC$ together with a family of structure morphisms
\begin{align}
\gamma_{r,(s_i)}\colon P_r \otimes \bigotimes_{1\leq i\leq r} P_i \to P_{\sum_i s_i},
\qquad
r, s_1, \ldots s_r \geq 1
\end{align}
satisfying the following axioms:
\begin{itemize}
\item[] {\sc(associative)} The following diagram commutes:
\begin{align}
\label{eq:operad_ass}
\xymatrix{
\disp{P_r \otimes \bigotimes_{1\leq i\leq r} P_{s_i} \otimes \bigotimes_{
\substack{
1\leq i\leq r
\\
1\leq j\leq s_i}}}
P_{t_{ij}} \ar[rr]^\simeq
\ar[dd]_{\gamma_{r,(s_i)}\times\id}
&&
\disp{P_r \otimes \bigotimes_{1\leq i\leq r} \Bigl(P_{s_i}\otimes\bigotimes_{1\leq j\leq s_i} P_{t_{ij}}\Bigr)}
\ar[d]^{\id\otimes\bigotimes_{1\leq i\leq r} \gamma_{s_i,(t_{ij})_j}}
\\
&&
\disp{P_r \otimes \prod_{1\leq i\leq r} P_{\sum_j t_{ij}}}
\ar[d]^{\gamma_{r,(\sum_j t_{ij})}}
\\
\disp{P_{\sum_i s_i} \otimes \bigotimes_{
\substack{1\leq i\leq r
\\
1\leq j\leq s_i}} P_{t_{ij}}
\ar[rr]_{\qquad\gamma_{\sum_i s_i, (t_{1,1},\ldots,t_{1,s_1},\ldots,t_{r,1},\ldots,t_{r,s_r})}}}
&&
\disp{P_{\sum_{i,j} t_{ij}}.}
}
\end{align}

\item[] {\sc(unit)} There is a \emph{unit map} $\eta\colon 1 \to P_1$ such that the compositions
\begin{align}
P_r \otimes 1^{\otimes r} \:\sr{\id \otimes \eta^{\otimes r}}{\lra}\: P_r \otimes P_1^{\otimes r} \:\sr{\gamma_{r,(1,\ldots,1)}}{\lra}\: P_r,
\qquad
1 \otimes P_s \:\sr{\eta\otimes\id}{\lra} P_1 \otimes P_s \:\sr{\gamma_{1,(s)}}{\lra}\: P_s
\end{align}
are the iterated right resp.\ left unit morphism in $\cC$.
\null\hfill$\triangle$
\end{itemize}
\end{definition}

\begin{defprop}
\label{defprop:Kr_is_an_operad}
For any $r\geq 1$ and $\bs \in \bZ_{\geq1}^r$, define $T_{r,(s_i)}$ to be the following element of $K_{\sum_i s_i}^\tree$:
\begin{figure}[H]
\centering
\def\svgwidth{0.1\columnwidth}
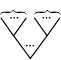
\end{figure}
\noindent Define $\gamma_{r,(s_i)}\colon K_r^\tree\times \prod_i K_{s_i}^\tree \to K_{\sum_i s_i}^\tree$ by setting $\gamma_{r,(s_i)} \coloneqq \gamma_{T_{r,(s_i)}}$, where the latter map was defined in Def.-Lem.~2.14, \cite{b:2ass}.
Then $(K_r)_{r \geq 1}$ with these composition maps forms a non-$\Sigma$ operad in the category of posets.
Similarly, $(\ol\cM_r)_r$ is a non-$\Sigma$ operad in \textsf{Top}.
\end{defprop}

\begin{proof}
To prove that the operations $\gamma_{r,(s_i)}\colon K_r^\tree \times \prod_i K_{s_i}^\tree \to K_{\sum_i s_i}^\tree$ make $(K_r)_{r\geq 1}$ into a non-$\Sigma$ operad, we must verify {\sc(associative)} and {\sc(unit)}.
{\sc(unit)} is an immediate consequence of the definitions of $\gamma_{1,(s)}$ and $\gamma_{r,(1,\ldots,1)}$.
{\sc(associative)} follows from a diagram chase:
\begin{figure}[H]
\centering
\def\svgwidth{1.0\columnwidth}
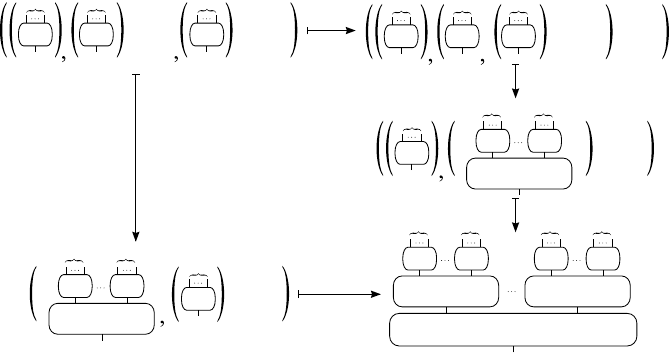
\end{figure}
\end{proof}

\begin{definition}
\label{def:2op}
A \emph{non-$\Sigma$ relative 2-operad} in a category $\cC$ with finite limits is a pair
\begin{align}
\Bigl((P_r)_{r\geq 1}, (Q_\bm)_{\bm \in \bZ^r_{\geq0}\setminus\{\bzero\},r \geq 1}\Bigr),
\end{align}
where $(P_r)_{r\geq 1}$ is a non-$\Sigma$ operad in $\cC$ with structure morphisms $\gamma_{r,(s_i)}$, and where $(Q_\bm) \subset \cC$ is a collection of objects together with a family of structure morphisms
\begin{align}
\label{eq:Gamma_def}
\Gamma_{\bm,(\bn^a_i)}
\colon
Q_\bm \times \prod_{1\leq i\leq r} \prod^{P_{s_i}}_{1\leq a\leq m_i} Q_{\bn_i^a} \to Q_{\sum_a \bn_1^a,\ldots,\sum_a \bn_r^a},
\quad
r, s_1, \ldots s_r \geq 1, \bm \in \bZ_{\geq0}^r\setminus\{\bzero\}, \bn_i^a \in \bZ_{\geq0}^{s_i}\setminus\{\bzero\}.
\end{align}
(Here the subscript in $Q_{\sum_a \bn_1^a,\ldots,\sum_a \bn_r^a}$ denotes the concatenation of $\sum_a \bn_1^a$, $\sum_a \bn_2^a$, etc., which is a vector of length $\sum_i s_i$.)
We require these objects and morphisms to satisfy the following axioms.
\begin{itemize}
\item[] {\sc(projections)} $\bigl((P_r),(Q_\bm)\bigr)$ is equipped with projection morphisms
\begin{align}
\pi_\bm\colon Q_\bm \to P_r,
\qquad
r \geq 1,
\:
\bm \in \bZ^r_{\geq0}\setminus\{\bzero\}
\end{align}
such that the following diagram commutes:

\begin{align}
\label{eq:Gamma_gamma_coherence}
\xymatrix{
\disp{Q_\bm \times \prod_{1 \leq i \leq r} \prod^{P_{s_i}}_{1\leq a \leq m_i} Q_{\bn_i^a}
\ar[dd]_{\bigl(\pi_\bm, \disp\prod_{1 \leq i \leq r}  \pi\bigr)}
\ar[rr]^{\hspace{0.2in}\Gamma_{\bm, (\bn_i^a)}}}
&&
\disp{Q_{\sum_j \bn_1^a, \ldots, \sum_a \bn_r^a} \ar[dd]^{\pi_{\sum_a \bn_1^a, \ldots, \sum_a \bn_r^a}}}
\\
&&
\\
\disp{P_r \times \prod_{1\leq i\leq r} P_{s_i}}
\ar[rr]_{\gamma_{r,(s_i)}}
&&
\disp{P_{\sum_i s_i}}.
}	
\end{align}

\medskip

\item[] {\sc(associative)} 
The following diagram commutes:
\begin{align}
\label{eq:relative_2-operad_Gamma}
\xymatrix{
\disp{Q_\bm \times \prod_{1\leq i\leq r} \prod_{1 \leq a \leq m_i}^{P_{s_i}} Q_{\bn_i^a} \times \prod_{
\substack{
1\leq i\leq r
\\
1\leq j\leq s_i}}
\prod_{
\substack{
1\leq a\leq m_i
\\
1\leq b \leq n_{ij}^a}}^{P_{t_{ij}}} Q_{\bp_{ij}^{ab}}}
\ar[r]^{\hspace{-0.2in}\simeq}
\ar[dd]_{\Gamma_{\bm,(\bn_i^a)}\times\id}
&
\disp{Q_\bm \times \prod_{1\leq i\leq r} \prod_{1\leq a\leq m_i}^{P_{s_i}\times\prod_j P_{t_{ij}}} \Bigl(Q_{\bn_i^a} \times \prod_{1\leq j\leq s_i}\prod_{1\leq b\leq n_{ij}^a}^{P_{t_{ij}}} Q_{\bp_{ij}^{ab}}\Bigr)}
\ar[d]^{\id \times \bigl(\Gamma_{\bn_i^a,(\bp_{ij}^{ab})}\bigr)}
\\
&
\disp{Q_\bm \times \prod_{1\leq i\leq r}\prod_{1\leq a\leq m_i}^{P_{\sum_j t_{ij}}} Q_{\sum_b \bp_{i1}^{ab}, \ldots, \bp_{is_i}^{ab}}}
\ar[d]^{\Gamma_{\bm,(\sum_b \bp_{i1}^{ab},\ldots,\sum_b \bp_{is_i}^{ab})}}
\\
\disp{Q_{\sum_a \bn_1^a,\ldots,\sum_a\bn_r^a} \times \prod_{
\substack{
1\leq i\leq r
\\
1\leq j\leq s_i}} \prod_{
\substack{
1\leq a\leq m_i
\\
1\leq b \leq n_{ij}^a}
}^{P_{t_{ij}}} Q_{\bp_{ij}^{ab}}}\qquad
\ar[r]_{\Gamma_{(\sum_a \bn_1^a,\ldots,\sum_a \bn_1^a),(\bp_{ij}^{ab})}\vspace{11in}}
&
\qquad\quad\disp{Q_{\sum_{a,b} \bp^{ab}_{11},\ldots,\sum_{a,b} \bp^{ab}_{1s_1},\ldots,\sum_{a,b} \bp^{ab}_{r1},\ldots,\sum_{a,b} \bp^{ab}_{rs_r}}},
}
\end{align}
where the fiber products are with respect to the projection morphisms described in {\sc(projections)}, and where the notation of concatenated sums appearing as subscripts should be interpreted in the same way as we explained in Def.~\ref{def:2op}.

\medskip

\item[] {\sc(unit)}
If $1$ denotes the final object of $\cC$, then there is a ``unit map" $\kappa\colon 1 \to Q_1$ such that the compositions
\begin{align}
Q_\bm \times 1^{\times |\bm|} \:\sr{\id \times \kappa^{\times \bn}}{\lra}\: Q_\bm \times Q_1^{\times |\bm|} \:\sr{\Gamma_{\bm,((1,\ldots,1),\ldots,(1,\ldots,1))}}{\lra}\: Q_\bm,
\quad
1 \times Q_\bn \:\sr{\kappa\times\id}{\lra} Q_1 \times Q_\bn \:\sr{\gamma_{1,((\bn))}}{\lra}\: Q_\bn
\end{align}
are identified with the identity morphism via the canonical isomorphism $Q_\bn\times 1^{|\bm|}\cong Q_\bn$.
\break\null\hfill$\triangle$
\end{itemize}
\end{definition}

\subsection{Examples of relative 2-operads}
\label{ss:examples}

We now turn to two examples of relative 2-operads.
The first, $(\twoO_\bn)$, is an illustrative example intended as a warm-up.
It is a 2-operad in spaces relative to the little intervals operad, and each $\twoO_\bn$ sits inside of the arity-$|\bn|$ space in the little squares operad.
The second is the 2-associahedra $(W_\bn)$, which form a 2-operad of posets relative to the operad of associahedra.
This is our marquee example; in fact, it motivated the notion of relative 2-operads.
We expect these two relative 2-operads to be homotopy equivalent in an appropriate sense not yet defined, once we pass to a topological realization of the latter.

\subsubsection{$(\twoO_\bn)$}

We begin with the relative 2-operad $(\twoO_\bn)$.
Each $\twoO_\bn$ is a configuration space of $|\bn|$ disjoint rectangles inside a bounding square.
$\twoO_\bn$ is a subspace of the $|\bn|$-th space in the little 2-cubes operad, obtained by requiring certain of the rectangles to be horizontally aligned.
First, we recall the definition of the little intervals operad $\bigl(\oneO_r\bigr)$.

\begin{definition}
For any $r \geq 1$, define $\oneO_r$ to be the space of increasing linear embeddings of $r$ copies of $[0,1]$ into $[0,1]$, such that:
\begin{itemize}
\item The $r$ images are disjoint.

\item For any $i<j$, the image of the $i$-th interval is to the left of the image of the $j$-th.
\end{itemize}
$\bigl(\oneO_r\bigr)_{r\geq1}$ forms an operad, where the composition maps are defined by rescaling and inserting configurations as in Fig.~\ref{fig:intervals_ex}.
\null\hfill$\triangle$
\end{definition}

\begin{figure}[H]
\centering
\def\svgwidth{1.0\columnwidth}
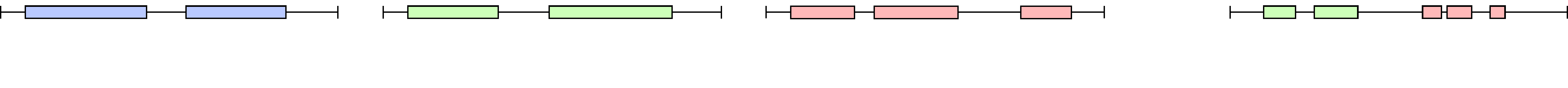
\caption{Here we illustrate the composition map $\gamma_{2,(2,3)}$ of $\bigl(\oneO_r\bigr)$.
It acts by linearly shrinking the second and third configurations and using them to replace the two intervals in the first configuration.
\label{fig:intervals_ex}}
\end{figure}

\noindent
We can now define $\bigl(\twoO_\bn\bigr)$, which forms a 2-operad relative to $\bigl(\oneO_r\bigr)$.

\begin{definition}
For any $r \geq 1$ and $\bn \in \bZ^r_{\geq0}\setminus\{\bzero\}$, define $\twoO_\bn$ to be the space of pairs $(2C,C)$, where $C$ is a configuration in $\oneO_r$ and where $2C$ is a collection of linear embeddings of $|\bn|$ copies of $[0,1]^2$ into $[0,1]^2$ satisfying the following properties:
\begin{itemize}
\item Each embedding is of the form $(x,y) \mapsto (ax+c,by+d)$ with $a,b > 0$.

\item The $|\bn|$ images are disjoint.

\item Reindex the embeddings by referring to the $(n_1+\cdots+n_{i-1}+j)$-th embedding as the $(i,j)$-th embedding.
Then we require that for any $i,j$, the postcomposition of the $(i,j)$-th embedding with the projection $\pr_1\colon \bR^2 \to \bR$ is equal to the $i$-th embedding in $C$.

\item For any $i$ and $j<j'$, the image of the $(i,j)$-th embedding lies below the image of the $(i,j')$-th.
\end{itemize}
$\bigl(\twoO_\bn\bigr)$ is a 2-operad relative to $\bigl(\oneO_r\bigr)$, with composition maps defined by rescaling and inserting configurations as in Fig.~\ref{fig:two-cubes_ex} and with projection $\twoO_\bn \to \oneO_r$ given by sending $(2C,C)$ to $C$.
\null\hfill$\triangle$
\end{definition}

\noindent
We denote an element $(2C,C)$ of $\twoO_\bn$ by a square above a horizontal interval, as shown below in Fig.~\ref{fig:two-cubes_ex}.
The interval is decorated by subintervals, which indicate the images of the embeddings in $C$.
The preimages of the intervals under the first projection $\pr_1\colon \bR^2 \to \bR$ are shown as subrectangles of height 1 in the square.
The images of the embeddings in $2C$ are denoted by subrectangles of the square of height less than 1.

\begin{figure}[H]
\centering
\def\svgwidth{1.0\columnwidth}
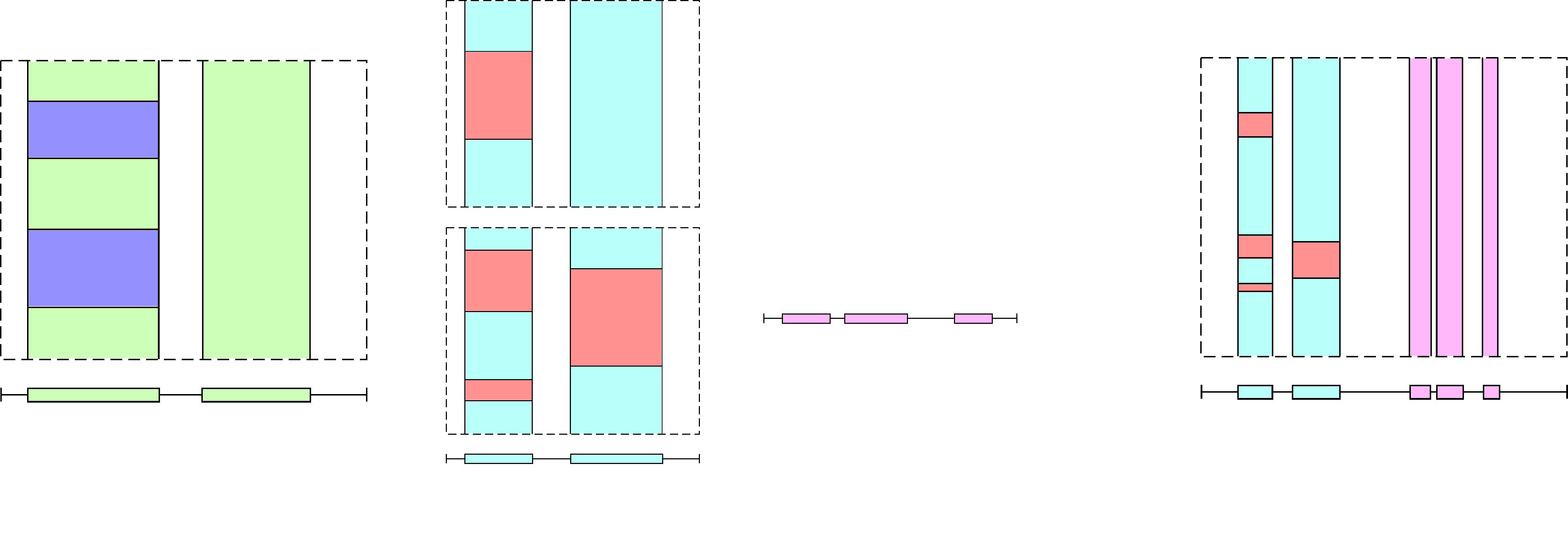
\caption{Here we illustrate one of the composition maps of $\bigl(\twoO_\bn\bigr)$.
It acts on the underlying configurations of intervals by $\gamma_{2,(2,3)}$, and on the configurations of rectangles by linearly shrinking the second and third configurations and using them to replace the two blue rectangles in the first configuration.
\label{fig:two-cubes_ex}
}
\end{figure}

\subsubsection{The 2-associahedral relative 2-operad $(W_\bn)$}

We begin by recalling the definition of the 2-associahedra $W_\bn$ from \cite{b:2ass}, verbatim.

\medskip

\noindent
{\bf Def.\ 3.1, \cite{b:2ass}.}
A \emph{stable tree-pair of type $\bn$} is a datum $2T = T_b \sr{f}{\to} T_s$, with $T_b, T_s, f$ described below:
\begin{itemize}
\item The \emph{bubble tree} $T_b$ is a planted ribbon tree whose edges are either solid or dashed, which must satisfy these properties:
	\begin{itemize}
		\item The vertices of $T_b$ are partitioned as $V(T_b) = V_\comp \sqcup V_\seam \sqcup V_\mk$, where:
			\begin{itemize}
				\item every $\alpha \in V_\comp$ has $\geq 1$ solid incoming edge, no dashed incoming edges, and either a dashed or no outgoing edge;
				\item every $\alpha \in V_\seam$ has $\geq 0$ dashed incoming edges, no solid incoming edges, and a solid outgoing edge; and
				\item every $\alpha \in V_\mk$ has no incoming edges and either a dashed or no outgoing edge.
			\end{itemize}
			We partition $V_\comp \eqqcolon V_\comp^1 \sqcup V_\comp^{\geq2}$ according to the number of incoming edges of a given vertex.
		\item ({\sc stability}) If $\alpha$ is a vertex in $V_\comp^1$ and $\beta$ is its incoming neighbor, then $\#\!\incom(\beta) \geq 2$; if $\alpha$ is a vertex in $V_\comp^{\geq2}$ and $\beta_1,\ldots,\beta_\ell$ are its incoming neighbors, then there exists $j$ with $\#\!\incom(\beta_j) \geq 1$.
	\end{itemize}
	\item The \emph{seam tree} $T_s$ is an element of $K_r^\tree$, i.e.\ the poset of planted ribbon trees with $r$ leaves.
	\item The \emph{coherence map} is a map $f\colon T_b \to T_s$ of sets having these properties:
		\begin{itemize}
			\item $f$ sends root to root, and if $\beta \in \incom(\alpha)$ in $T_b$, then either $f(\beta) \in \incom(f(\alpha))$ or $f(\alpha) = f(\beta)$.
			\item $f$ contracts all dashed edges, and every solid edge whose terminal vertex is in $V_\comp^1$.
			\item For any $\alpha \in V_\comp^{\geq2}$, $f$ maps the incoming edges of $\alpha$ bijectively onto the incoming edges of $f(\alpha)$, compatibly with $<_\alpha$ and $<_{f(\alpha)}$.
			\item $f$ sends every element of $V_\mk$ to a leaf of $T_s$, and if $\lambda_i^{T_s}$ is the $i$-th leaf of $T_s$, then $f^{-1}\{\lambda_i^{T_s}\}$ contains $n_i$ elements of $V_\mk$, which we denote by $\mu_{i1}^{T_b},\ldots,\mu_{in_i}^{T_b}$.
		\end{itemize}
\end{itemize}
We denote by $W_\bn^\tree$\label{p:Wntree} the set of isomorphism classes of stable tree-pairs of type $\bn$.
Here an isomorphism from $T_b \sr{f}{\to} T_s$ to $T_b' \sr{f'}{\to} T_s'$ is a pair of maps $\varphi_b\colon T_b \to T_b'$ and $\varphi_s\colon T_s \to T_s'$ that fit into a commutative square in the obvious way and that respect all the structure of the bubble trees and seam trees.
\null\hfill$\triangle$

Next, we recall that for every stable tree-pair $2T$ there is an inclusion of the following form:
\begin{align}
\Gamma_{2T} \colon \prod_{
{\alpha \in V_\comp^1(T_b)}
\atop
{\incom(\alpha)=(\beta)}
} W_{\#\!\incom(\beta)}^\tree
\times
\prod_{\rho \in V_\inte(T_s)} \prod^{K_{\#\!\incom(\rho)}}_{
{\alpha\in V_\comp^{\geq2}(T_b)\cap f^{-1}\{\rho\}}
\atop
{\incom(\alpha)=(\beta_1,\ldots,\beta_{\#\!\incom(\rho)})}
}
\hspace{-0.25in} W^\tree_{\#\!\incom(\beta_1),\ldots,\#\!\incom(\beta_{\#\!\incom(\rho)})}
\hra W_\bn^\tree.
\end{align}
This map is defined in \cite[Def.-Lem.\ 4.4]{b:2ass}.
While the complete setup for this map would take us too far afield, in Fig.~\ref{fig:2op_composition} below we will give an illustrative example.
First, for any $r, s_1,\ldots, s_r \geq 1$, $\bm \in \bZ^r_{\geq0}\setminus\{\bzero\}$, and $\bn_i^1,\ldots,\bn_i^{m_i} \in \bZ_{\geq0}^{s_i}$, we define $2T_{\bm,(\bn_i^a)}$ to be the following tree-pair in $W_{\sum_a \bn_1^a,\ldots,\sum_a \bn_r^a}$:
\begin{figure}[H]
\centering
\def\svgwidth{0.55\columnwidth}
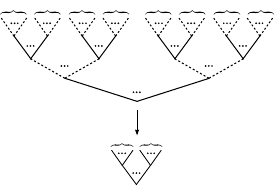
\end{figure}
\noindent
The associated composition map maps between the following posets:
\begin{align}
\Gamma_{2T_{\bm,(\bn_i^a)}}
\colon
W_\bm
\times
\prod_{1\leq i\leq r} \prod^{K_{s_i}}_{1\leq a\leq m_i} W_{\bn_i^a}
\to
W_{\sum_a \bn_1^a,\ldots,\sum_a \bn_r^a}.
\end{align}
On the level of seam trees, this map is the ordinary operadic composition in $K_r$.
On the level of bubble trees, this map attaches the roots of the bubble trees of the elements in $\prod_{1\leq i\leq r} \prod^{K_{s_i}}_{1\leq a\leq m_i} W_{\bn_i^a}$ to the leaves of the bubble tree of the element of $W_\bm$.
We depict an example in the following figure:
\begin{figure}[H]
\centering
\def\svgwidth{1.0\columnwidth}
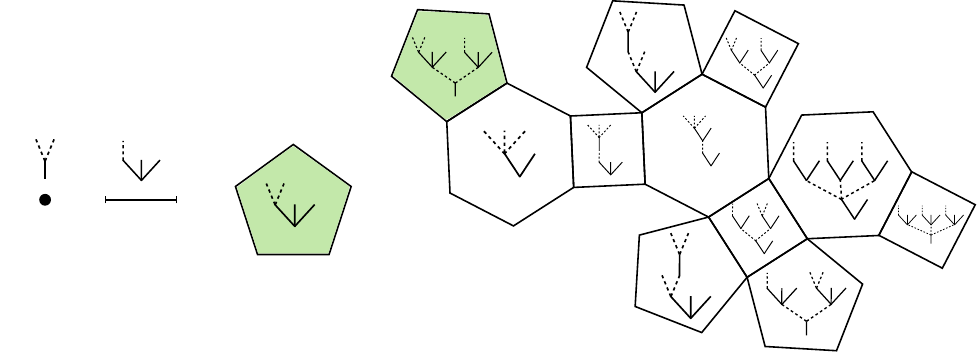
\caption{Each $W_\bn$ is an abstract polytope.
The target is the face lattice of a polyhedron; we depict its net on the right-hand side.
We label the top faces of each polytope by the bubble tree of the corresponding tree-pair; for reasons of space, we do not include the seam tree, and we do not label the positive-codimension faces.
The green pentagon in the codomain is the image of this composition map.
\label{fig:2op_composition}}
\end{figure}

We can finally exhibit the 2-associahedra as a 2-operad relative to the associahedra.
\begin{defprop}
\label{defprop:2Mn_is_2op}
Define $\Gamma_{\bm,(\bn^a_i)}
\colon
W_\bm \times \prod_{1\leq i\leq r} \prod^{K_{s_i}}_{1\leq a\leq m_i} W_{\bn_i^a} \to W_{\sum_a \bn_1^a,\ldots,\sum_a \bn_r^a}$ by setting $\Gamma_{\bm,(\bn^a_i)} \coloneqq \Gamma_{2T_{\bm,(\bn^a_i)}}$, where the latter map was defined in Def.-Lem.~4.3, \cite{b:2ass}.
Then $\bigl((K_r),(W_\bn)\bigr)$ with these composition maps forms a relative 2-operad in the category of posets.
\end{defprop}

\begin{proof}
{\sc(unit)} holds trivially, and {\sc(projections)} is equivalent to the observation that for any tree-pair $2T = (T_b \to T_s)$, $\Gamma_{2T}$ and $\gamma_{T_s}$ are intertwined by the projections.
{\sc(associative)} holds by a diagram chase similar to the one conducted in the proof of Def.~Prop.~\ref{defprop:Kr_is_an_operad} above.
We conduct this chase in the following figure.
Because this figure is so complex, we display only the bubble trees, rather than the full tree-pairs.
In fact, the diagram chase conducted in the proof of Def.~Prop.~\ref{defprop:Kr_is_an_operad} is identical to what happens to the seam trees in the following figure.
\begin{figure}[H]
\centering
\def\svgwidth{1.1\columnwidth}
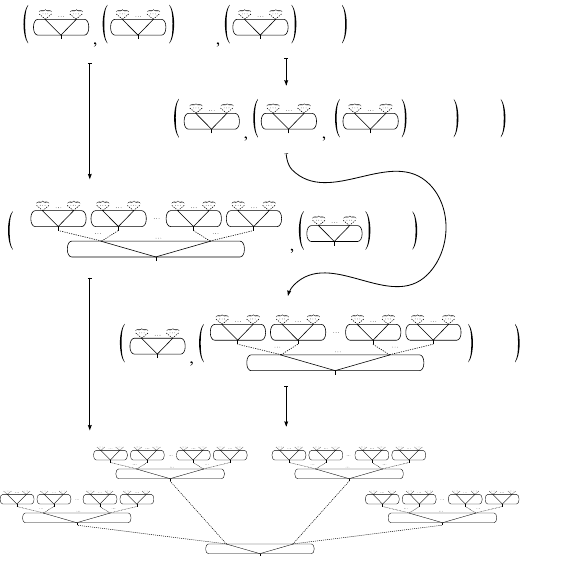
\end{figure}
\end{proof}

Similarly, $\bigl((\cM_r),(\ol{2\cM}_\bn)\bigr)$ is a relative 2-operad in \textsf{Top}.

\subsection{Relative 2-operads and Batanin's $2$-operads}
\label{ss:batanin}

In this subsection, we relate the notion of a relative 2-operad presented in Def.~\ref{def:2op} above to Batanin's theory of monoidal globular categories.
Specifically, we will show in Prop.~\ref{prop:connection_with_batanin} that relative 2-operads coincide with certain 2-operads in a certain globular category.

Before we can define the notion of a 2-operad, we must define $r$-globular categories, $r$-globular monoidal categories, collections in globular categories, and the monoidal structure on $\coll(\cC)$.
First, we recall the definition of an $r$-globular category:

\medskip

\noindent
{\bf Definition 2.1, \cite{ba:monoidal} (paraphrased).}
An $r$-globular category $\cC$ is a sequence of categories $(C_0,...,C_r)$ and functors $s_{k,l},t_{k,l}\colon C_k \to C_l$ for $k>l$ satisfying the following relations:
\begin{gather}
s_{l,m}\circ s_{k,l}=s_{k,m},
\quad 
t_{l,m}\circ t_{k,l}=t_{k,m},
\\
s_{l,l-1}\circ t_{l+1,l}=s_{l+1,l-1},
\quad
s_{l,l-1}\circ t_{l+1,l}=s_{l+1,l-1},
\quad
t_{l,l-1}\circ s_{l+1,l}=t_{l+1,l-1}.
\nonumber
\end{gather}
\noindent
If $\cC$ is an $r$-globular category for $0 < r < \infty$, and $s < r$, we denote by $C_{<s}$ the globular category obtained from $\cC$ by omitting all the categories $C_t$ for $s<t\leq r$.
\null\hfill$\triangle$

\medskip

\noindent
The most relevant case of a globular category for us is the case $r=2$.
A 2-globular category consists of a category $C_0$ of ``objects'', a category $C_1$ of ``morphisms'', and a category $C_2$ of ``2-morphisms''.
Note that in this definition, we assume no composition rule on $C_1$ and $C_2$. 

We will also need the notion of a monoidal structure on an $n$-global category (which we could call a ``composition rule'', in the terminology of the previous paragraph).

\medskip

\noindent
{\bf Definitions 2.3 and 2.6, \cite{ba:monoidal} (paraphrased).}
An $r$-globular monoidal category consists of an $r$-globular category $\cC$ together with various ``relative composition rules'' $\otimes_{i,j}\colon C_i \times_{s_{i,j},t_{i,j}} C_i \to C_i$ for $j<i$ and ``relative identity maps'' $1_{i,j}\colon C_i \to C_j$ for $i<j$, satisfying certain associativity and unity relations.
An \emph{augmented} $r$-globular monoidal category is an $r$-globular monoidal category together with a compatible collection of monoidal structures $(\otimes_{i,-1}\colon C_i \times C_i \to C_i)_{0 \leq i \leq r}$.
\null\hfill$\triangle$

\medskip


The final preliminary definition is that of a \emph{collection} in an $r$-globular category.
To make this definition, we will need to use $\Tree$, the $r$-globular category of $r$-stage trees.
Namely, $\Tree_i$ is the category of functors $F\colon [i]^\op \to \Delta$ where $[i]$ is the linearly-ordered set of size $i+1$ and $\Delta$ is the category of ordered sets.
Geometrically, we think of $F(j)$ as the set of vertices of distance $j$ from the root of the tree.
The unique morphism $F(j)\to F(j-1)$ assigns to a vertex $v\in F(j)$ its immediate parent $p(v)$ in the tree $F$.
Then $\Tree \coloneqq \bigl(\Tree_i\bigr)_{i\ge 0}$ forms an $\omega$-globular category, with the source and target maps both induced from the prefix embedding $[j]\to [i]$ for $j<i$.
Geometrically, this means that $s_{i,j}(F) = t_{i,j}(F)$ is obtained from $F$ by chopping off all vertices of $F$ with height greater than $j$.
Moreover, $\Tree$ admits a globular monoidal structure with relative tensor product obtained from the monoidal structure on $\Delta$ by concatenation, and the unit maps $1_{i,j}$ obtained by declaring an $j$-stage tree to be $i$-stage tree with no leaves of height greater than $j$.

We now come to the notion of a \emph{collection} in an $r$-globular category. 

\medskip

\noindent
{\bf Definition 6.1, \cite{ba:monoidal}.}
A collection in an $r$-globular category $\cC=\bigl(C_i\bigr)_{0\le i\le r}$ is a globular functor $\Tree_{\le r}\to C$.
\null\hfill$\triangle$

\medskip

\noindent
In other words, a collection $A$ assigns to each tree $T$ of height $i < r$ an object of $C_i$, thought of intuitively as the object of ``$T$-shaped operations''.
These choices are required to be compatible, in the sense that we must specify isomorphisms between $s_{i,i-1}A(T)$, $t_{i,i-1}A(T)$, and $A(\partial(T))$ in a compatible way.   

Suppose that $\cC$ is an augmented monoidal $r$-globular category $\cC$ with finite globular coproducts which distribute over all tensors in $\cC$.
In this situation, Batanin equips $\coll(\cC)$ with a monoidal structure (\cite[Thm.\ 6.1]{ba:monoidal}).
(The notions of globular coproduct and the distributivity of such products are defined in \cite[Def.\ 5.2]{ba:monoidal} and \cite[Def.\ 5.4]{ba:monoidal}, respectively.)
We shall briefly explain the underlying binary tensor product in the case $r=2$, which is the case relevant to us.
To do so, we will need the notion of a \emph{2-tree}.

\begin{definition}
A 2-tree of height $i$ is a tree $F\colon[i]^\op \to \Delta$ and an assignment $v\mapsto G_v$ from vertices of $F$ to trees, such that $G_v$ is a $\rm{ht}(v)$-stage tree, and such that for every $v\in F(j)$ we have $G_{p(v)} = s_{j,j-1}(F_v)$.
\null\hfill$\triangle$  
\end{definition}      

\noindent
Note that to specify a 2-tree, it suffices to specify its values on the leaves of $F$; all other junctions in $F$ only impose constraints on these choices.

Let $2\Tree = \bigl(2\Tree_i\bigr)_{i>0}$ denote the 2-globular category of $2$-trees, with its obvious globular structure.
There is then a functor $\delta\colon 2\Tree \to \Tree$, defined like so:
\begin{gather}
\delta\Bigl(F,\bigl(G_v\bigr)_{v\in \coprod_jF(j)}\Bigr)
\coloneqq
\text{the tree with vertices }
\coprod_j \coprod_{v\in F(j)} G_v(j),
\text{ such that } p(v,u)=(p(v),p(u)).
\end{gather}
(In the definition of $\delta$, we have identified $p(u)$ with an element of $G_{p(v)}$ via the isomorphism $G_{p(v)}\cong \partial G_v$.)
Using this functor, we can equip the $r=2$ case of $\coll(\cC)$ with a monoidal structure.
We define the binary tensor product on $\coll(\cC)$ to be the following operation:
\begin{align}
\label{eq:tensor_on_collections}
(A\otimes B) (F)
\coloneqq
\coprod_{\delta(F',\{G_v\})=F} A(F') \otimes_{2,-1} \bigotimes_{
{2,0}
\atop
{v\in F'\colon \height(v)=1}
}
\left(\bigotimes_{
{2,1}
\atop
{\height(u)=2,p(u)=v}
}
B(G_v) \right).
\end{align}
In the general case, one recursively replaces the value at a vertex of the base-tree $F'$ by the relative tensor product of the values at its immediate descendents, where the relative tensor product is the one that matches the types of the objects decorating the relevant vertices.

We are finally ready to define the notion of an $r$-operad. 
\begin{definition}
Let $\cC$ be an $r$-globular monoidal category. An $r$-operad in $\cC$ is an associative algebra in $\coll(\cC)$.
\null\hfill$\triangle$
\end{definition}

\noindent
In order to identify the notion of a relative 2-operad with a certain class of 2-operads, we will define a certain $r$-globular category $\cC$.
Let $\cD$ be a category with finite limits, endowed with a distinguished class of morphisms $F$ that is closed under composition, contains all isomorphisms, and is closed under pullbacks.
Set $\cC\coloneqq\{C_0,C_1,C_2\}$, where $C_0\coloneqq \pt$ and $C_1 \coloneqq \cD$, and where $C_2 \coloneqq \spn_F(\cD)$ is the category $X$ of spans from $Y$ to $Z$ in $\cD$, where both maps are in $F$, and with morphisms the morphisms of diagrams.
The source and target maps are just given by $X$ and $Y$ respectively.
This 2-globular category $\cC$ is endowed with a globular monoidal structure given as follows. The relative tensor product of $C_1$ over $C_0$ is given by the product in $\cD$.
The relative tensor product of $C_2$ over $C_0$ is given by the monoidal structure on $\spn_F{\cD}$ induced from the product in $\cD$.
Finally, the relative tensor product of $C_2$ over $C_1$ is given by the composition of spans.

Finally, we can relate our notion of relative 2-operad to the notion of $r$-operad. 
\begin{proposition}
\label{prop:connection_with_batanin}
Let $\cD$ be a category with finite limits.
A relative 2-operad in $\cD$ is a 2-operad in the globular category $\cC$ as above, for which the source and target maps coincide.
\end{proposition}

\noindent
{\it Proof sketch.}
We show how to get a relative 2-operad from a 2-operad in the globular category $\cC$ for which the source and target maps coincide.
The other direction is similar.
Given a 2-operad $A$ in $\cC$, we need to supply a relative 2-operad 
\begin{align}
\Bigl((P_r)_{r\geq 1}, (Q_\bm)_{\bm \in \bZ^r_{\geq0}\setminus\{\bzero\},r \geq 1}\Bigr),
\end{align}
Denote by $T_r$ the tree consisting of a root and $r$ leaves; for $\bm \in\bZ^r_{\geq0}\setminus\{\bzero\}$, denote by by $T_{r,(\bm_i)}$ the tree with $r$ leaves $v_1, \ldots, v_r$ of height 1 and $\bm_i$ leaves of degree 2 with parent $v_i$, as pictured here:
\begin{figure}[H]
\centering
\def\svgwidth{0.3\columnwidth}
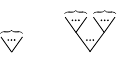
\end{figure}
\noindent
We set $P_r \coloneqq A(T_r)\in \cC_1 =\cD$.
By definition, the object $A(T_{r,(\bm_i)})$ is an object of $\cD$ which we take to be $Q_\bm$, endowed with a map
\begin{align}
A(T_{r,(\bm_i)}) \simeq Q_\bm
\to
P_r \simeq A(T_r)
\end{align}
which we take to be the structure map $\pi_\bm$ of the relative 2-operad. 
	
The multiplication maps $A\otimes A \to A$ now give us the structure maps $\gamma_{r,(s_i)}\colon P_r \times \prod_i P_{s_i} \to P_{\sum_i s_i}$ as follows.
For the height-1 tree $T_{\sum_i s_i}$, $A(T_r) \times \prod_i A(T_{s_i}) \cong P_r \times \prod_i P_{s_i}$ is a summand of $(A\otimes A)(T_{\sum_i s_i})$ and so the multiplication map of $A$ provides in particular a map $\gamma_{r,(s_i)}$ as above.
For the maps $\Gamma_{\bm,(\bn^a_i)}$, the tensor product formula \eqref{eq:tensor_on_collections} shows that the height 2 tree $T_{\sum_i s_i,(\sum_a\bn^a_i)}$, which realizes as the diagonal of a 2-stage tree.
Hence the map
\begin{align}
(A\otimes A)\bigl(T_{\sum_i s_i,(\sum_a\bn^a_i)}\bigr)
\to
A (T_{\sum_i s_i,(\sum_a\bn^a_i)})
\end{align}         
gives in particular a map $\Gamma_{\bm,(\bn^a_i)}$ as in \eqref{eq:Gamma_def}, in which the fiber products in the source correspond to the operation $\displaystyle{\bigotimes_{1,2}}$ and the products to $\displaystyle{\bigotimes_{0,2}}$.
Finally, the unit of $A$ gives the unit of the relative 2-operad.
The associativity constraints of $\Gamma_{\bm,(\bn^a_i)}$ and $\gamma_{r,(s_i)}$ (cf.\ \eqref{eq:operad_ass}, and \eqref{eq:relative_2-operad_Gamma}) now correspond to the associativity of the multiplication of $A$, and the unitality of the operad to the unitality of $A$
Finally, \eqref{eq:Gamma_gamma_coherence} corresponds to the fact that the multiplication happens in spans, hence is compatible with the projection to the base of the span.
\null\hfill\qed

\begin{remark}
One can organize all relative 2-operads into a category in such a way that Proposition~\ref{prop:connection_with_batanin} provides a fully faithful embedding of relative 2-operads into Batanin's category of 2-operads.
\null\hfill$\triangle$
\end{remark}

\section{Categories over relative 2-operads}
\label{s:cats_over_2ops}

In this final section, we turn to the notion of a category over a relative 2-operad. 
In particular, in \S\ref{ss:A-infinity-2-tilde-spaces} we consider algebras over $(\twoO_\bn)$, which we call \emph{$\wt{(A_\infty,2)}$-spaces}; in \S\ref{ss:A-infinity-2-categories}, we consider categories over $(\ol{2\cM}_\bn)$, which we call \emph{$(A_\infty,2)$-categories}.
The definition of $(A_\infty,2)$-categories is the main contribution of this paper, and is a necessary part of the first author's project to construct the symplectic $(A_\infty,2)$-category.
In \S\ref{ss:A-infinity-2-tilde-spaces} we prove Prop.~\ref{prop:theta}, which asserts that from a map $A \to X$ of pointed spaces we can construct a $\wt{(A_\infty,2)}$-space $\theta(A\to X)$.
This provides a collection of examples of algebras over a relative 2-operad.

We note that just as the notion of a relative 2-operad can be rephrased in terms of Batanin's theory of $n$-operads (as discussed in \S\ref{ss:batanin}), the notion of a category over a relative 2-operad can be reformulated in Batanin's language.
For further details of the part of Batanin's theory relevant to this section, we direct the reader to \cite{batanin_markl}.

\subsection{The definition of a category over a relative 2-operad}

We shall now define the notion of a category over a relative 2-operad.
We note that there are other approaches to operadic higher category theory, see e.g.\ as in \cite{ba:symm,cheng:higher categories}; the approach we describe here is suited to the first author's ongoing project to construct the symplectic $(A_\infty,2)$-category, as described in \S\ref{s:intro}.
Another relevant construction is Tamarkin's homotopy 2-category of dg-categories, as in \cite[\S5.3]{tamarkin:what_do_dg-categories}; we expect that this fits into the formalism of $R$-linear $(A_\infty,2)$-categories constructed in the current paper.

Recall the well-known (see e.g.\ \cite[Def.~4]{may:categories}) notion of a category over an operad:

\begin{definition}
Let $O = (P_r)_{r\geq1}$ be an operad in a category $\cC$ with products, considered as a symmetric monoidal category using the Cartesian symmetric monoidal structure.
Recall that a \emph{(nonunital) category over $O$} consists of a set of objects $\Ob$ and, for every $x,y \in \Ob$, a morphism object $\Mor(x,y) \in \cC$, 
together with source and target maps $s,t\colon \Mor\to \Ob$.

The pair $(\Ob,\Mor)$ is equipped with higher composition maps of the form
\begin{align}
c_r\colon P_r\times \Mor(x_0,x_1)\times\cdots\times \Mor(x_{n-1},x_n)\to \Mor(x_0,x_n),
\end{align}
which are associative in the sense that the following diagram commutes for every choice of (a) a sequence of objects $x_0,\ldots,x_r \in \Ob$ and (b) further sequences $y_0^i=x_{i-1},y_1^i,\ldots,y_{s_i}^i=x_i$ for every $i$:
\begin{align}
\label{eq:assoc_for_cat}
\xymatrix{
P_{\sum s_i}\times \prod\limits_{{1\leq i\leq r}\atop{1\leq j\leq s_i}} \Mor(y_{j-1}^i,y_j^i)
\ar[rr]^{\hspace{0.35in}c_{\sum_i s_i}}
&&
\Mor(x_0,x_r)
\\
P_r\times\prod\limits_{1\leq i\leq r} P_{s_i}\times\prod\limits_{{1\leq i\leq r}\atop{1\leq j\leq s_i}} \Mor(y_{j-1}^i,y_j^i)
\ar[u]^{\gamma_{r,(s_i)}\times\id}
\ar[d]_\simeq
&&
\\
P_r\times\prod\limits_{1\leq i\leq r}\Bigl(P_{s_i}\times\prod\limits_{1\leq j\leq s_i} \Mor(y_{j-1}^i,y_j^i)\Bigr)
\ar[rr]_{\hspace{0.5in}\id\times(c_{s_i})}
&&
P_r \times \prod\limits_{1\leq i\leq r} \Mor(x_{i-1},x_i)
\ar[uu]_{c_r}
}
\end{align}
\null\hfill$\triangle$
\end{definition}

%

\begin{remark}
Under the identification of a relative 2-operad with a 2-operad in the sense of Batanin, one can further identify a category over a relative 2-operad with an algebra over the associated 2-operad, in the sense of \cite[Def.\ 7.3]{ba:monoidal}.
Note that this a \emph{different} convention from the one we use here: for us, an algebra over a relative 2-operad is simply a category over a relative 2-operad, which has only a single object.
\null\hfill$\triangle$
\end{remark}

To adapt this to the notion of a 2-category over a relative 2-operad $2O$, we just mimic this construction and add 2-morphisms to the story. 

\begin{definition}
\label{def:2cat}
Let $2O=\bigl((P_r)_{r\geq 1}, (Q_\bm)_{\bm \in \bZ^r_{\geq0}\setminus\{\bzero\}}\bigr)$ be a relative 2-operad in a category $\cC$ with finite limits. A (non-unital) category over $2O$ consists of the following data: 
\begin{itemize}
\item A set of objects $\Ob$.
 
\item For each $x,y\in \Ob$, an object $\Mor(x,y) \in \cC$, which we think of as morphisms from $x$ to $y$.

\item For each $x,y\in \Ob$, an object $2\Mor(x,y) \in \cC$, which we think of as 2-morphisms over $x,y$.

\item Source and target morphisms $s,t \colon 2\Mor(x,y)\to \Mor(x,y)$.

\item Composition laws: for each $x_0,\ldots,x_r\in \Ob$ a morphism
\begin{align}
c_r\colon P_r \times \prod_{j=1}^r \Mor(x_{j-1},x_j) \to \Mor(x_0,x_r).
\end{align}

\item 2-composition laws: For each $x_0, \ldots, x_r\in \Ob$ and each $\bm \in \bZ^r_{\geq0}\setminus\{\bzero\}$, a morphism
\begin{align}
2c_\bm\colon Q_\bm \times \prod_{j=1}^r 2\Mor(x_{j-1},x_j)^{\times_{\Mor(x_{j-1},x_j)} m_j} \to 2\Mor(x_0,x_r),
\end{align}
where $2\Mor(x_{j-1},x_j)^{\times_{\Mor(x_{j-1},x_j)} m_j}$ (slightly abusively) denotes the fiber product
\begin{align}
2\Mor(x_{j-1},x_j)^{\times_{\Mor(x_{j-1},x_j)} m_j}
\coloneqq
\underbrace{2\Mor(x_{j-1},x_j) \:{}_s\!\times_t \cdots \:{}_s\!\times_t 2\Mor(x_{j-1},x_j)}_{m_j}.
\end{align}
\end{itemize}

\noindent
We require these data to satisfy the following conditions: 
\begin{itemize}
\item The data $(\Ob,\Mor,c_r)$ is a category over $(P_r)_{r\geq 1}$.

\item The 2-composition must be associative, in the sense that the following diagram must commute, for every choice of (a) a sequence of objects $x_0,\ldots,x_r \in \Ob$ and (b) further sequences $y_0^i=x_{i-1},y_1^i,\ldots,y_{s_i}^i=x_i$ for every $i$:
\begin{align}
\label{eq:assoc_for_2cat}
\xymatrix{
Q_{\sum\limits_a\bn_1^a,\ldots,\sum\limits_a\bn_r^a} \times \prod\limits_{{1\leq i\leq r}\atop{1\leq j\leq s_i}} 2\Mor(y_{j-1}^i,y_j^i)^{\times_{\Mor(y_{j-1}^i,y_j^i)}\sum\limits_a n_{ij}^a} \ar@/^3pc/[rrd]^{\hspace{1in}2c_{\sum\limits_a\bn_1^a,\ldots,\sum\limits_a\bn_r^a}} &&
\\
Q_\bm\times\prod\limits_{1\leq i\leq r}\prod\limits_{1\leq a\leq m_i}^{P_{s_i}} Q_{\bn_i^a}\times\prod\limits_{{1\leq i\leq r}\atop{1\leq j\leq s_i}} 2\Mor(y_{j-1}^i,y_j^i)^{\times_{\Mor(y_{j-1}^i,y_j^i)}\sum\limits_a n_{ij}^a} \ar[u]^{\Gamma_{\bm,(\bn_i^a)}\times\id}\ar[d]_\simeq && 2\Mor(x_0,x_r)
\\
Q_\bm\times\prod\limits_{1\leq i\leq r}\prod\limits_{1\leq a\leq m_i}^*\Bigl(Q_{\bn_i^a}\times\prod_{1\leq j\leq s_i} 2\Mor(y_{j-1}^i,y_j^i)^{\times_{\Mor(y_{j-1}^i,y_j^i)}n_{ij}^a}\Bigr) \ar[d]_{\id\times(2c_{\bn_i^a})} &&
\\
Q_\bm \times \prod\limits_{1\leq i\leq r} 2\Mor(x_{i-1},x_i)^{\times_{\Mor(x_{i-1},x_i)}m_i} \ar@/_4pc/[rruu]_{\hspace{0.35in}2c_\bm} &&
}
\end{align}
The asterisk appearing above the product sign on the left indicates that we are taking an appropriate fiber product so that the image under $(2c_{\bn_i^a})_a$ is in $2\Mor(x_{i-1},x_i)^{\times_{\Mor(x_{i-1},x_i)}m_i}$.

\item The composition of 2-morphisms and that of 1-morphisms must be compatible in the following sense.
Let $s_n,t_n \colon 2\Mor(x,y)^{\times_{\Mor(x,y)} n}\to \Mor(x,y)$ denote the compositions
\begin{gather}
s_n\colon\:2\Mor(x,y)^{\times_{\Mor(x,y)} n}\stackrel{\proj_1}{\lra} 2\Mor(x,y)\stackrel{s}{\lra} \Mor(x,y),
\\
t_n\colon\: 2\Mor(x,y)^{\times_{\Mor(x,y)} n}\stackrel{\proj_n}{\lra} 2\Mor(x,y)\stackrel{t}{\lra} \Mor(x,y),
\nonumber
\end{gather}
where $\proj_j$ is the projection to the $j$-th coordinate.
Then we require the following diagram to commute:
\begin{align}
\xymatrix{
Q_{\bm}\times \prod_j 2\Mor(x_{j-1},x_j)^{\times_{\Mor(x_{j-1},x_j)} m_j} \ar^{\hspace{0.75in}2c_\bm}[rr]\ar_{\bigl(\prod_j s_{\bm_j},\prod_j t_{\bm_j}\bigr)}[d] && 2\Mor(x_0,x_r) \ar^{(s,t)}[d]\\ 
(P_r \times \prod_j \Mor(x_{j-1},x_j))^2 \ar_{\hspace{0.25in}(c_r,c_r)}[rr]                                                                                 && \Mor(x_0,x_r)^2
}
\end{align}
\null\hfill$\triangle$
\end{itemize}
\end{definition}

\subsection{$\wt{(A_\infty,2)}$-spaces}
\label{ss:A-infinity-2-tilde-spaces}

We now define the notion of an $\wt{(A_\infty,2)}$-space and prove Prop.~\ref{prop:theta}.

\begin{definition}
	An $\wt{(A_\infty,2)}$-space is an algebra over $\bigl((\oneO_r),(\twoO_\bn)\bigr)$, i.e.\ a pair of spaces $2Y \sr{s,t}{\rightrightarrows} Y$ such that $Y$ is an $A_\infty$-algebra, and $2Y$ is equipped with composition maps
	\begin{align}
	\label{eq:Ainf_2_space}
	\twoO_\bn \times \underbrace{2Y \,_t\!\times_s \cdots \,_t\!\times_s 2Y}_{n_1} \times \cdots \times \underbrace{2Y \,_t\!\times_s \cdots \,_t\!\times_s 2Y}_{n_r}
	\to
	2Y
	\end{align}
	that satisfy suitable coherence conditions.
\null\hfill$\triangle$
\end{definition}

\begin{proposition}\label{prop:theta}
Fix a map $f\colon (A,q) \to (X,p)$ of pointed topological spaces.
Define a space $\theta(A \to X)$ by
\begin{align}
\label{eq:theta_def}
\theta(A \to X)
\coloneqq
\left\{
\left.\left(
\substack{
u\colon [0,1]^2 \to X
\\
\gamma_\pm\colon [0,1] \to A}
\right)
\:\right|\:
\substack{
u(-,0) = f\circ\gamma_-,
\\
u(-,1) = f\circ\gamma_+},
\:
\substack{
u(0,-) = p = u(1,-)
\\
\gamma_\pm(0) = q = \gamma_\pm(1)}
\right\},
\end{align}
and equip $\theta(A\to X)$ with maps $s,t \colon \theta(A\to X) \to \Omega A$ that send $(u,\gamma_+,\gamma_-)$ to $\gamma_-$ resp.\ $\gamma_+$.
Then the pair $\theta(A\to X) \sr{s,t}{\rightrightarrows} \Omega A$ is an $\wt{(A_\infty,2)}$-space.
\end{proposition}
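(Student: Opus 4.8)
The plan is to produce the two pieces of structure required of an $\wt{(A_\infty,2)}$-algebra---the $\oneO$-algebra structure on $Y=\Omega A$ and the $\twoO$-action on $2Y=\theta(A\to X)$---by explicit geometric insertion, and then to deduce the coherence conditions from the strict associativity of rescaling-and-insertion, exactly as one does for the little cubes operad acting on an iterated loop space. Throughout, $p\in X$ and $q\in A$ denote the basepoints, so that $f(q)=p$.

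First I would recall that $\Omega A$ is canonically an algebra over the little intervals operad $\oneO$: given $C\in\oneO_r$ and loops $\gamma_1,\ldots,\gamma_r\in\Omega A$, one reparametrizes $\gamma_i$ to run over the $i$-th subinterval of $C$ and extends by the constant loop at $q$ on the complement of the $r$ subintervals. This is the standard $A_\infty$-structure on a based loop space and needs no new argument.

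Next I would define the $\twoO$-action on $\theta(A\to X)$. Given $(2C,C)\in\twoO_\bn$ and a family $(u_{i,j},\gamma^+_{i,j},\gamma^-_{i,j})$ of elements of $2Y$ arranged along the fiber products in \eqref{eq:Ainf_2_space}---so that $\gamma^+_{i,j}=\gamma^-_{i,j+1}$ whenever the target of the $j$-th factor meets the source of the $(j+1)$-th---I would build the output square $u\colon[0,1]^2\to X$ by the following rule: on the $(i,j)$-th rectangle of $2C$, set $u$ equal to $u_{i,j}$ precomposed with the affine identification of that rectangle with $[0,1]^2$; on the vertical gaps within the $i$-th column, including the regions below the bottom and above the top rectangle, extend $u$ constantly in the $y$-direction using the relevant matching loop (this is well defined precisely because adjacent targets and sources agree); and on the horizontal complement of the intervals of $C$, set $u\equiv p$. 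The key well-definedness check is that all these pieces glue continuously, and the crucial observation is that along any vertical line lying over an endpoint of an interval of $C$ the map is constantly $p$: on the rectangles this holds because $u_{i,j}(0,-)=p=u_{i,j}(1,-)$, and on the gaps it holds because each matching loop takes the value $q$ at its endpoints and $f(q)=p$. Hence the columns glue to the constant-$p$ background, and the bottom and top edges reproduce $f\circ\gamma_-$ and $f\circ\gamma_+$, where $\gamma_-$ and $\gamma_+$ are the $\oneO$-compositions via $C$ of the $\gamma^-_{i,1}$ and of the $\gamma^+_{i,n_i}$ respectively. In particular the output lies in $\theta(A\to X)$, and that $s,t$ intertwine the $\twoO$-action with the $\oneO$-action along the projection $\twoO_\bn\to\oneO_r$ is immediate from this description of the output loops.

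The main work, and the step I expect to be the genuine obstacle, is verifying the associativity coherence---the one-object specialization of diagram \eqref{eq:assoc_for_2cat}, governed by the structure maps $\Gamma_{\bm,(\bn_i^a)}$ of $\twoO$. Here I would argue that, just as for the little cubes operad, both composing configurations in $\twoO$ and then acting, and acting iteratively, amount to the same nested rescale-and-insert of the squares $u_{i,j}$ into rectangles together with the same constant-in-$y$ filling of the resulting gaps. The horizontal bookkeeping is controlled by the strict associativity of the $\oneO$-action already used for $\Omega A$, while the vertical bookkeeping is controlled by the strict associativity of reparametrization in the $y$-coordinate together with the additivity $(\bn_i^a)\mapsto\sum_a\bn_i^a$ encoded in $\Gamma_{\bm,(\bn_i^a)}$. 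The care required is purely in matching these two families of reparametrizations and in confirming that the iterated constant gap-fills coincide; since every operation involved is strictly, rather than merely homotopy, associative, no higher coherence data are needed and the diagram commutes on the nose. The compatibility of $s,t$ with composition (the final diagram of Def.~\ref{def:2cat}) then follows formally, since $s$ and $t$ read off the bottom and top edges, which are by construction the $\oneO$-composites of the sources and targets of the inputs.
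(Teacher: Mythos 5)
Your proposal is correct and follows essentially the same route as the paper: the $\twoO$-action is defined by affinely inserting the squares $u_{i,j}$ into the rectangles of the configuration, filling the vertical gaps constantly in $y$ via $f$ applied to the matching loops and the horizontal complement by the basepoint $p$, with associativity holding strictly because rescale-and-insert is strictly associative. The paper's own proof is considerably terser (it reduces the continuity and coherence checks to ``clear from the picture''), so your spelled-out verification of the gluing along vertical lines over interval endpoints and of the compatibility of $s,t$ with the $\oneO$-composites is a faithful elaboration rather than a different argument.
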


\begin{proof}
To equip $\theta(A\to X)$ with the structure of an $(A_\infty,2)$-space, we must define composition maps as in \eqref{eq:Ainf_2_space} and verify that they satisfy the appropriate coherences.
We do so as follows.
For $2Y=\theta(A \stackrel{f}{\to}X)$, we define the map
\begin{align*}
\twoO_\bn \times \underbrace{2Y \,_t\!\times_s \cdots \,_t\!\times_s 2Y}_{n_1} \times \cdots \times \underbrace{2Y \,_t\!\times_s \cdots \,_t\!\times_s 2Y}_{n_r}
\to
2Y
\end{align*} 
like so:
\begin{itemize}
\item Picture the configuration in $\twoO_\bn$ as the unit square with height-1 green rectangles that contain blue subrectangles, as on the left of Fig.~\ref{fig:theta_comp}.

\item For every green strip, we are given a choice of a loop in $A$, and for each blue rectangle, we are given a choice of a triple $(u,\gamma_+,\gamma_-)$ as in \eqref{eq:theta_def}.
We think of it the latter a map from the considered blue rectangle to $X$ and two maps from the upper and lower edges to $A$, compatible in the obvious sense.

\item The fiber product exactly allows us to define a map from the unit square to $X$, as on the right in \eqref{eq:theta_def}.
\end{itemize}
Associativity is clear from the picture.
 
\begin{figure}[H]
\centering
\def\svgwidth{1.0\columnwidth}
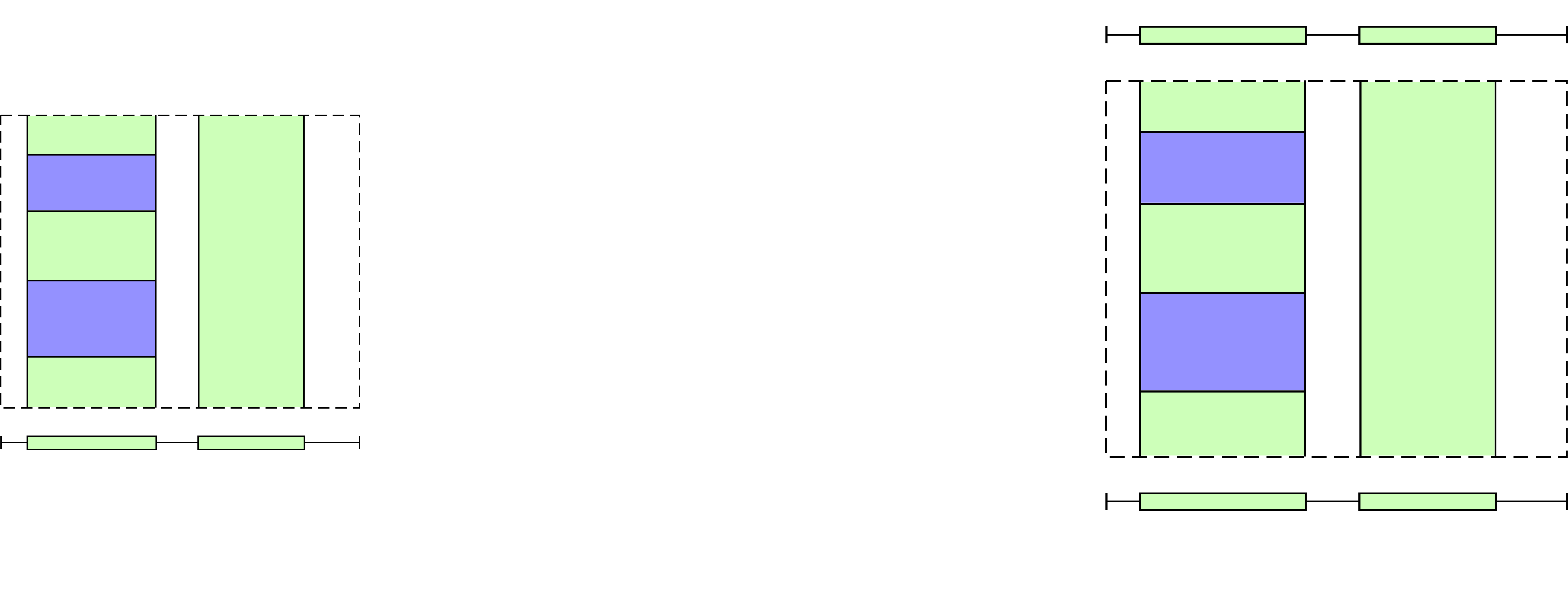
\caption{\label{fig:theta_comp}}
\end{figure}
\end{proof}

\subsection{$(A_\infty,2)$-categories}
\label{ss:A-infinity-2-categories}

The general definition of a category over a relative 2-operad specializes in the case of the 2-associahedral relative 2-operad to give the notion of an $(A_\infty,2)$-category over $\cC$.

\begin{definition}
An $(A_\infty,2)$-category is a category over the topological relative 2-operad $\bigl((\ol\cM_r),(\ol{2\cM}_\bn)\bigr)$.
\null\hfill$\triangle$
\end{definition} 

We would like to adapt Def.~\ref{def:2cat} to the case where the objects and 1-morphisms form an ordinary 1-category and the 2-morphisms are chain complexes over a ring, since this is the situation in the hypothetical $(A_\infty,2)$-category $\Symp$. 
In this situation, composition of 1-morphisms is independent of $P_r$, while composition of 2-morphisms is parametrized by the chain complex of $Q_\bm$.
We must be careful, because the collection $C_*(Q_\bm;R)$ is not a relative 2-operad in chain complexes, as the functor $C_*(-;R)$ is not limit preserving, and in particular does not behave well with respect to fibered products.
However, it is still possible to formulate a satisfactory definition, as we demonstrate below.
We will use suggestive notation for the 1-morphisms, which recalls the symplectic $(A_\infty,2)$-category.

\begin{definition}
Let $R$ be a ring. 
An \emph{$R$-linear category over a  relative 2-operad $\bigl((P_r),(Q_\bn)\bigr)$ in \textsf{Top}} consists of: 
\begin{itemize}
\item A category $(\Ob,\Mor,s,t)$.

\item For each pair of morphisms $L,K\colon M\to N$, a $\bZ$-graded complex of free $R$-modules $2\Mor(L,K)$.

\item Composition maps: for each $r\geq1$ and $\bm\in \bZ_{\geq0}^r\backslash \{\bzero\}$, for each sequence of objects $M_0,\ldots,M_r\in \Ob$, and for each collection of sequences $L_1^0,\ldots,L_1^{m_1},\ldots,L_r^0,\ldots,L_r^{m_r}$ with $L_i^j$ a morphism from $M_{i-1}$ to $M_i$, a composition map
\begin{align}
2c_\bm\colon C_*(Q_\bm)\otimes \bigotimes_{{1\leq i \leq r}\atop{1\leq j\leq m_i}}  2\Mor(L_i^{j-1},L_i^j)\to 2\Mor(L_1^0\circ\cdots\circ L_r^0,L_1^{m_1}\circ\cdots\circ L_r^{m_r}),
\end{align}
where $C_*(Q_\bm)$ denotes the complex of singular chains in $Q_\bm$ with coefficients in $R$.
\end{itemize}
\noindent
We require the composition maps to satisfy an associativity condition, expressed by the commutativity of the following diagram for every choice of (a) a sequence of objects $M_0,\ldots,M_r \in \Ob$, (b) further sequences $N_0^i=M_{i-1},N_1^i,\ldots,N_{s_i}^i=M_i$ for every $i$, and (c) 1-morphisms $L_{ij}^k\colon N_{j-1}^i\to N_j^i$:
\begin{gather}
\xymatrix{
C_*\bigl(Q_{\sum\limits_a\bn_1^a,\ldots,\sum\limits_a\bn_r^a}\bigr) \otimes \bigotimes\limits_{{1\leq i\leq r,1\leq j\leq s_i}\atop{1\leq k\leq \sum\limits_a n_{ij}^a}} \!2\Mor\bigl(L_{ij}^{k-1},L_{ij}^k\bigr)
\ar@/^6pc/[rdd]
&
\\
C_*\Bigl(Q_\bm\times\!\!\!\prod\limits_{1\leq i\leq r}\prod\limits_{1\leq a\leq m_i}^{P_{s_i}} Q_{\bn_i^a}\Bigr)\otimes\!\!\!\!\!\!\bigotimes\limits_{{1\leq i\leq r,1\leq j\leq s_i}\atop{1\leq k\leq \sum\limits_a n_{ij}^a}} \!\!\!\!\!\!2\Mor\bigl(L_{ij}^{k-1},L_{ij}^k\bigr) \ar[d]\ar[u]
&
\\
C_*(Q_\bm)\otimes\bigotimes\limits_{{1\leq i\leq r}\atop{1\leq a\leq m_i}} \Bigl(C_*(Q_{\bn_i^a})\otimes\!\!\bigotimes\limits_{{1\leq j\leq s_i}\atop{\sum\limits_{1\leq b< a} \!\!\!\!\! n_{ij}^b < k \leq \!\!\! \sum\limits_{1\leq b\leq a} \!\!\! n_{ij}^b}}\!\!\!2\Mor\bigl(L_{ij}^{k-1},L_{ij}^k\bigr)\Bigr) \ar[d]
&
\text{target}
\\
C_*(Q_\bm) \otimes \bigotimes\limits_{{1\leq i\leq r}\atop{1\leq a\leq m_i}} 2\Mor\Bigl(L_{i1}^{\sum\limits_{1\leq b< a} n_{i1}^b}\circ\cdots,L_{i1}^{\sum\limits_{1\leq b\leq a} n_{i1}^b}\circ\cdots\Bigr)
\ar@/_3pc/[ru]
&
}
\\
\nonumber
\\
\hspace{-2in}
\text{target}
\coloneqq
C\!\bigl(L_{11}^0\circ\cdots\circ L_{1s_1}^0\circ\cdots\circ L_{r1}^0\circ\cdots\circ L_{rs_r}^0,
\nonumber
\\
\hspace{1in}
L_{10,11}^{\sum_a n_{11}^a}\circ\cdots\circ L_{1(s_1-1),1s_1}^{\sum_a n_{1s_1}^a}\circ\cdots\circ L_{r0,r1}^{\sum_a n_{r1}^a}\circ\cdots\circ L_{r(s_r-1),rs_r}^{\sum_a n_{rs_r}^a}\bigr).
\nonumber
\end{gather}
\null\hfill$\triangle$
\end{definition}

\begin{remark}
Note that in the middle vertical map we implicitly use the swap maps of the tensor product of free modules.
We also use the natural, strictly associative map \[C_*(X\times_Z Y;R) \to C_*(X;R)\otimes C_*(Y;R)\] which is the composition of the map induced from the inclusion $X\times_Z Y\to X\times Y$ and the Alexander--Whitney map.
\end{remark}

We finally come to the definition that is one of the main contributions of this paper:

\begin{definition}
\label{def:A_infty_2_cat}
An \emph{$R$-linear $(A_\infty,2)$-category} is an $R$-linear category over the relative 2-operad $\bigl((\ol\cM_r),(\ol{2\cM}_\bn)\bigr)$.
\null\hfill$\triangle$
\end{definition} 

Having all these definitions and flavors of 2-categories over relative 2-operads, we can of course define algebras over a relative 2-operad.
These are just categories with a single object. 

\begin{definition}
Let $2O=\big(P_r,Q_\bm\big)$ be a relative 2-operad.
\begin{enumerate}
\item An \emph{algebra over $2O$} is a category over $O$ with a single object.

\item An \emph{$R$-linear algebra over $2O$} is an $R$-linear category over $2O$ with a single object.
\null\hfill$\triangle$
\end{enumerate}
\end{definition}

\subsection*{Acknowledgments}

Kevin Costello suggested that the definition of an $(A_\infty,2)$-category does not have to be based on a cellular model of $C_*(\ol{2\cM}_\bn)$, which helped the first author arrive at the relative 2-operadic structure of the 2-associahedra.
Paul Seidel suggested that the first author think about Rmk.~1.2.1 in \cite{el}, which led to Prop.~\ref{prop:theta}.
Jacob Lurie pointed out that the definition of an $(A_\infty,2)$-space should require two maps to the underlying $A_\infty$-space, not just one.
Discussions with Michael Batanin provided useful context.
The comments of an anonymous editor led the authors to analyze the connection with Batanin's work, which resulted in the subsection \S\ref{ss:batanin}.
The first author thanks Mohammed Abouzaid and Robert Lipshitz for their encouragement.

N.B.\ was supported by an NSF Mathematical Sciences Postdoctoral Research Fellowship and a Schmidt Fellowship.
S.C.\ was supported by the Adams Fellowship of the Israeli Academy of Science and Humanities.


\begin{thebibliography}{9}

\bibitem[Ba1]{ba:config}
M.A.~Batanin.
\newblock Configuration spaces from Combinatorial, Topological and Categorical perspectives.
\newblock Lecture given on September 27, 2006.  Slides available at \url{http://maths.mq.edu.au/~street/BatanAustMSMq.pdf}.

\bibitem[Ba2]{ba:eckman-hilton}
M.A.~Batanin.
\newblock The Eckmann--Hilton argument and higher operads.
\newblock {\it Advances in Mathematics} 217 (2008), no.\ 1, 334--385.

\bibitem[Ba3]{ba:homotopy}
M.A.~Batanin.
\newblock Homotopy coherent category theory and $A_\infty$-structures in monoidal categories.
\newblock {\it Journal of Pure and Applied Algebra} 123 (1998), no.\ 1--3, 67--103.

\bibitem[Ba4]{ba:monoidal}
M.A.~Batanin.
\newblock Monoidal globular categories as a natural environment for the theory of weak $n$-categories.
\newblock {\it Advances in Mathematics} 136 (1998), no.\ 1, 39--103.

\bibitem[Ba5]{ba:symm}
M.A.~Batanin.
\newblock Symmetrisation of $n$-operads and compactification of real configuration spaces.
\newblock Adv.\ Math.\ 211 (2007), no.\ 2, 684--725.

\bibitem[BaMa]{batanin_markl}
M.~Batanin, M.~Markl.
\newblock Operadic categories and duoidal Deligne's conjecture.
\newblock {\it Advances in Mathematics} 285 (2015), 1630--1687.

\bibitem[Bo1]{b:2ass}
N.~Bottman.
\newblock 2-associahedra.
\newblock Accepted (2018), {\it Algebraic \& Geometric Topology}.

\bibitem[Bo2]{b:realization}
N.~Bottman.
\newblock Moduli spaces of witch curves topologically realize the 2-associahedra.
\newblock Accepted (2018), {\it Journal of Symplectic Geometry}.

\bibitem[Bo3]{b:sing}
N.~Bottman.
\newblock Pseudoholomorphic quilts with figure eight singularity.
\newblock Accepted (2018), {\it Journal of Symplectic Geometry}.

\bibitem[Bo4]{b:thesis}
N.~Bottman.
\newblock Pseudoholomorphic quilts with figure eight singularity.
\newblock Thesis (2015), Massachusetts Institute of Technology.

\bibitem[Bo5]{b:fm}
N.~Bottman.
\newblock A simplicial version of the 2-dimensional Fulton--MacPherson operad.
\newblock Preprint, available online at \url{https://arxiv.org/abs/2101.03211}.

\bibitem[BW]{bw:compactness}
N.~Bottman, K.~Wehrheim.
\newblock Gromov compactness for squiggly strip shrinking in pseudoholomorphic quilts.
\newblock {\it Selecta Math. (N.S.)} 24 (2018), no.\ 4, pp.\ 3381--3443.

\bibitem[Ch]{cheng:higher categories}
Cheng, Eugenia.
\newblock Comparing operadic theories of $n$-category.
\newblock {\it Homology, Homotopy and Applications} 13.2 (2011): 217--249.

\bibitem[EL]{el}
J.~Evans, Y.~Lekili.
\newblock Generating the Fukaya categories of
Hamiltonian $G$-manifolds
\newblock arXiv:1507.05842.

\bibitem[MWW]{mww}
S.~Ma'u, K.~Wehrheim, C.~Woodward.
\newblock $A_\infty$ functors for Lagrangian correspondences.
\newblock {\it Selecta Math.\ (N.S.)} 24 (2018), no.\ 3, 1913--2002.

\bibitem[MSS]{mss:operads}
M.~Markl, S.~Shnider, J.D.~Stasheff.
\newblock {\em Operads in algebra, topology and physics}, volume~96 of {\em  Mathematical Surveys and Monographs}.
\newblock American Mathematical Society, Providence, RI, 2007.

\bibitem[Ma]{may:categories}
May, J. P. {\em Operadic categories, $A_\infty$-categories and $n$-categories.}
Notes from a talk given at Morelia, Mexico, available online at \url{http://www.math.uchicago.edu/~may/NCATS/PostMexico.pdf} (2001).

\bibitem[T]{tamarkin:what_do_dg-categories}
D.~Tamarkin.
\newblock
``What do dg-categories form?''
\newblock
{\it Compositio Mathematica} 143 (2007), no.\ 5, pp.\ 1335--1358.


\end{thebibliography}
\end{document}